\newtheorem{theorem}{Theorem}
\newtheorem{prop}{Proposition}
\theoremstyle{remark}
\newtheorem{remark}{Remark}
\theoremstyle{definition}
\newtheorem{defn}[equation]{Definition}
\numberwithin{equation}{section}
\newcommand{\pa}{\partial}
\newcommand{\eps}{\varepsilon}
\newcommand{\bx}{{\bf x}}
\newcommand{\cN}{{\mathcal{N}}}
\newcommand{\cA}{{\mathcal{A}}}
\newcommand{\cH}{{\mathcal{H}}}
\newcommand{\N}{\mathbb{N}}
\newcommand{\R}{\mathbb{R}}
\newcommand{\cV}{\mathcal{V}}
\newcommand{\C}{\mathbb{C}}
\newcommand{\bS}{\mathbb{S}}
\newcommand{\cC}{\mathcal{C}}
\newcommand{\cL}{\mathcal{L}}
\newcommand{\x}{\textbf{x}}
\newcommand{\y}{\textbf{y}}
\newcommand{\os}{\overline{s}}
\newcommand{\us}{\underline{s}}
        \definecolor{pink}{rgb}{1,0,1}
        \definecolor{purple}{rgb}{0.4,0.2,1}
\begin{document}
\title[Conformal deformations of conic metrics to csc]{Conformal deformations of conic metrics to constant scalar curvature}

\author[Thalia Jeffres]{Thalia Jeffres}
\address{Department of Mathematics \\ Wichita State University \\ 1845 N Fairmount, Wichita, KS 67260-0033.} \email{jeffres@math.wichita.edu}

\author[Julie Rowlett]{Julie Rowlett}
\address{Hausdorff Center for Mathematics\\ Villa Maria
Endenicher Allee 62 \\ D-53115 Bonn \\ Germany.} 
\curraddr{Chalmers University and the University of Gothenburg \\ Mathematical Sciences \\ 41296 Gothenburg Sweden} 
\email{julie.rowlett@chalmers.se} 

\keywords{constant scalar curvature, Yamabe problem, conic singularity, incomplete, singular space, semi-linear partial differential equation, degenerate partial differential operator}

\maketitle

\begin{abstract}

We consider conformal deformations within a class of incomplete Riemannian metrics which generalize conic orbifold singularities by allowing both warping and any compact manifold (not just quotients of the sphere) to be the ``link'' of the singular set.  Within this class of ``conic metrics,'' we determine obstructions to the existence of conformal deformations to constant scalar curvature of any sign (positive, negative, or zero).  For conic metrics with negative scalar curvature,  we determine sufficient conditions for the existence of a conformal deformation to a conic metric with constant scalar curvature $-1$; moreover, we show that this metric is unique within its conformal class of conic metrics.    Our work is in dimensions three and higher.       
\end{abstract}

\section{Introduction}


Among all Riemannian metrics, mathematicians and physicists have particular interest in certain canonical metrics.  A natural questions is, \em given a Riemannian metric on a smooth manifold, does there exist a conformal deformation to a metric with constant scalar curvature?  \em  This is known as the Yamabe problem and has been completely solved for closed manifolds through a series of papers starting with Yamabe's work \cite{ya}, followed by  \cite{tru} and \cite{aub}, and completed by \cite{scho}; an excellent survey is \cite{lp}.  After the solution of the Yamabe problem on closed manifolds, it is natural to consider the problem for open manifolds.  The first difficulty is to determine an appropriate formulation.  When the open manifold is the complement of a submanifold $\Sigma$ of a closed Riemannian manifold $(M,g)$, the ``singular Yamabe problem'' is to find a \em complete \em metric conformal to $g$ on $M - \Sigma$ which has constant scalar curvature.  This problem has been studied by several authors including \cite{f1}, \cite{fm}, \cite{ms}, and \cite{mc}.  Solutions to semilinear elliptic equations (including the Yamabe equation) in this setting have been investigated in \cite{f3}, \cite{f4}, \cite{f5}, and \cite{fm}.   In a similar vein, given a complete non-compact Riemannian manifold, \cite{amco} and \cite{f2} study conformal deformations to complete metrics with constant scalar curvature.  We are interested in a related, but different, singular Yamabe problem where the initial metric is incomplete with a particular singular structure at the boundary, and this structure must be preserved by the conformal deformation.  Our problem is:  \em given an open manifold with conic metric, does there exist a conformal deformation to a conic metric with constant scalar curvature?  \em  Akutagawa-Botvinnik's work concerning the Yamabe problem on manifolds with cylindrical ends \cite{ab2} and the Yamabe invariant of cylindrical manifolds and orbifolds \cite{ab1}, \cite{ab3} is closely related to this problem.  

The metrics we consider are smooth Riemannian metrics on the interior of a manifold with boundary which have a particular degenerate structure at the boundary; the precise definitions are contained in the following section.  For a smooth topological manifold $X$ with boundary $\pa X^m$ and $X \cup \pa X$ compact, with respect to a boundary defining function $x$ and local coordinates $(x, \y)$ in a collar neighborhood of $\pa X$, a conic metric $g$ has the degenerate form
$$g = dx^2 + x^2 \sum_{i,j=1} ^m a_{ij} dy_i dy_j = dx^2 + x^2 h(x, \y; d\y).$$
For the conic metrics we consider, $h(0, \y; d\y)$ is a metric on $\pa X$ and is independent of the choice of $x$.  Our first result concerns the obstructions.  Note that throughout this work, the dimension of the boundary is $m$, and the total dimension is $m+1$.   

\begin{prop}  Let $X$ be a smooth topological manifold with boundary $\pa X = Y^m$, $m \geq 2$, $X \cup \pa X$ compact, and conic metric $g$.  If there exists a conic metric which is conformal to $g$ and has constant scalar curvature, then $g$ satisfies the following.
$$ \begin{array}{ll}
1. & \textrm{The scalar curvature of $(\pa X, h(0, \y; d\y))$ is a positive constant.} \\ 
2. & \sum_{k,l=1} ^m \left( h|_{x=0} \right)^{kl}\frac{\partial (h|_{x=0})_{kl}}{\partial x} =0. \end{array} $$
\end{prop}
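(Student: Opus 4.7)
Plan: My approach is to exploit the conformal change of scalar curvature. In total dimension $n=m+1$, writing $\tilde g = u^{4/(m-1)}g$ for a positive function $u$ on $X$, one has
\begin{equation*}
-\tfrac{4m}{m-1}\Delta_g u + R_g u = \tilde R\, u^{(m+3)/(m-1)},
\end{equation*}
where $\tilde R$ is the (constant) scalar curvature of $\tilde g$. I would first argue that, in order for $\tilde g$ to be itself a conic metric with a well-defined link on $\partial X$ and unit radial direction, the factor $u$ must extend continuously to $\partial X$ with $u|_{\partial X}=u_0$ a positive constant; a non-constant boundary value $u_0(\y)$ would prevent the leading $dx^2$ coefficient $u_0(\y)^{4/(m-1)}$ of $\tilde g$ from being put in the form $d\tilde x^2$ with a single-valued radial coordinate (the cross term $u_0^{\nu/2}\partial_{\y}u_0^{\nu/2}\,dx\,d\y$ cannot be absorbed into $\tilde x^2\tilde h$). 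Since $\tilde R$ is constant and $u$ is positive and bounded, the right-hand side is bounded near $\partial X$, and all singular coefficients in $x$ of the left-hand side must vanish.

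Next I would compute the asymptotic expansions of $R_g$ and $\Delta_g u$ near $\partial X$. Using Gauss--Codazzi on the slices $\{x=\mathrm{const}\}$ with induced metric $x^2 h(x,\y;d\y)$, mean curvature $H = m/x + \tfrac{1}{2}\tr_{h}\dot h$, and the Riccati equation for the radial Ricci component, a direct computation yields
\begin{equation*}
R_g = \frac{R_{h_0}(\y) - m(m-1)}{x^2} + \frac{R'_{h_0}(\y) - (m+1)\tr_{h_0}\dot h_0(\y)}{x} + O(1),
\end{equation*}
where $h_0 := h(0,\y;d\y)$, $\dot h_0 := \partial_x h|_{x=0}$, and $R'_{h_0}(\y) := \partial_x R_{h(x)}|_{x=0}$. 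Expanding $u = u_0 + x u_1(\y) + O(x^2)$ and using $\Delta_g = \partial_x^2 + H\partial_x + x^{-2}\Delta_{h(x)}$, together with $\Delta_{h_0}u_0=0$ (since $u_0$ is constant on $\partial X$), gives
\begin{equation*}
\Delta_g u = \frac{\Delta_{h_0} u_1(\y) + m\, u_1(\y)}{x} + O(1).
\end{equation*}
Forcing the $x^{-2}$ coefficient of the left-hand side of the Yamabe equation to vanish yields $u_0\bigl(R_{h_0}(\y) - m(m-1)\bigr) = 0$, hence $R_{h_0}\equiv m(m-1)$, a positive constant; this is condition $1$.

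For condition $2$ I would examine the $x^{-1}$ coefficient, which (after using condition 1) gives the compatibility relation
\begin{equation*}
u_0\bigl(R'_{h_0}(\y) - (m+1)\tr_{h_0}\dot h_0(\y)\bigr) = \tfrac{4m}{m-1}\bigl(\Delta_{h_0}u_1(\y) + m\, u_1(\y)\bigr)
\end{equation*}
on $\partial X$. To extract a pointwise condition depending only on $g$, I would apply the same asymptotic expansion to $\tilde g$ in its own conic radial coordinate $\tilde x$, defined by $d\tilde x = u^{2/(m-1)}dx$ and with link family $\tilde h(\tilde x,\y;d\y) = u^{4/(m-1)}(x/\tilde x)^2 h(x,\y;d\y)$; since $R_{\tilde g}$ is bounded, the analogous $\tilde x^{-1}$ coefficient for $\tilde g$ must vanish as well. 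Translating this second condition back through the coordinate change and conformal rescaling, writing $\dot{\tilde h}_0$ and $R'_{\tilde h_0}$ in terms of $\dot h_0$, $R'_{h_0}$, and the Taylor coefficient $u_1$, and combining with the displayed compatibility relation, I expect the $u_1$-dependent terms to cancel, leaving the pointwise identity $\sum_{k,l}h^{kl}|_{x=0}\partial_x h_{kl}|_{x=0} = 0$. The main technical obstacle is precisely this cancellation step: one must carefully track how $\dot h_0$ and the linearized scalar curvature at $h_0$ transform under the combined conformal and radial-coordinate change, and verify that after elimination the remaining obstruction on $g$ is exactly the pointwise vanishing of $\tr_{h_0}\dot h_0$ rather than merely an integrated condition.
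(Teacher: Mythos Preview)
Your derivation of condition 1 via the $x^{-2}$ coefficient of the Yamabe equation is fine, but your plan for condition 2 has a genuine gap. The two equations you intend to combine---the $x^{-1}$ coefficient of the Yamabe equation (your step (a)) and the vanishing of the $\tilde x^{-1}$ coefficient of $R_{\tilde g}$ (your step (b))---are \emph{the same equation}. Indeed, the Yamabe identity $R_{\tilde g}=u^{-(m+3)/(m-1)}\bigl(-\tfrac{4m}{m-1}\Delta_g u+R_g u\bigr)$ shows that the $x^{-1}$ coefficient of $R_{\tilde g}$ vanishes exactly when your displayed compatibility relation holds (once condition 1 kills the $x^{-2}$ term), and since $\tilde x$ is a constant multiple of $x$ to leading order, the $\tilde x^{-1}$ and $x^{-1}$ coefficients differ only by a nonzero constant. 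So there is nothing independent to combine, and the $u_1$-terms cannot be eliminated this way.

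What you are missing is a stronger constraint coming from the hypothesis that $\tilde g$ is itself conic: not only must $u_0$ be constant on $\partial X$, but the conformal factor must satisfy $u^{4/(m-1)}=u_0^{4/(m-1)}+O(x^2)$, i.e.\ $u_1\equiv 0$. A linear term $w_1(\y)x$ in the metric factor would make $x^{-2}\tilde g$ fail the exactness condition $a_{00}=1+O(x^2)$ for any boundary defining function. Once you know $u_1=0$, your $x^{-1}$ equation collapses to $u_0\,a_{-1}=0$, giving condition 2 directly.

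The paper exploits this and bypasses the Yamabe equation entirely. It first pins down the allowed form of the conformal factor (leading part $u_0+O(x^2)$, or more generally $u_0x^{\alpha}+O(x^{\alpha+2})$), so that the new link metric $\tilde h_0$ is a constant rescaling of $h_0$ and $\partial_t\tilde h|_{t=0}$ is a nonzero constant multiple of $\partial_x h|_{x=0}$. Then it simply applies the boundedness criterion for scalar curvature (Proposition~\ref{prop:sbd}) to $\tilde g$, whose scalar curvature is constant and hence bounded: this yields conditions 1 and 2 for $\tilde h$, which transfer immediately to $h$ through the scaling. No expansion of $\Delta_g u$ and no cancellation argument is needed.
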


In our main result, we construct conformal deformations to constant negative scalar curvature.  Note that the hypothesis allows the scalar curvature to vanish at $\pa X$.  

\begin{theorem}  Let $X$ be a smooth topological manifold with boundary $\pa X = Y^m$, $m \geq 2$, $X \cup \pa X$ compact, and conic metric $g$ with negative scalar curvature on $X$.  Assume the scalar curvature of $(Y, h(0, \y; d\y)) \equiv m(m-1)$, and $h$ satisfies condition $2$ of Proposition 1.   Then, there exists a unique conic metric $\tilde{g}$ which is conformal to $g$ and has constant negative scalar curvature $-1$.  
\end{theorem}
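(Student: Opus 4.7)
The manifold has dimension $n = m+1$. Writing $\tilde g = u^{4/(m-1)} g$ and demanding $R_{\tilde g} = -1$ turns the problem into the semilinear equation
\begin{equation*}
-c_m \Delta_g u + R_g u = -u^{p}, \qquad c_m = \frac{4m}{m-1}, \qquad p = \frac{m+3}{m-1}.
\end{equation*}
I would seek $u > 0$ bounded above and below on $X$, admitting a positive continuous extension to $\pa X$, so that $\tilde g$ is again a conic metric with link $u(0,\y)^{4/(m-1)} h(0,\y;d\y)$.

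\textbf{Function spaces and linear theory.} I would work in weighted H\"older or Sobolev spaces built from the conic vector fields $\pa_x$ and $x^{-1}\pa_{y_i}$, which are bounded with respect to $g$. Under the assumption that the link has scalar curvature $m(m-1)$, the leading $x^{-2}$ term of $R_g$ cancels, and condition 2 of Proposition 1 removes the subleading term, so $R_g$ extends continuously to $\pa X$. Indicial analysis of $L_g = -c_m \Delta_g + R_g$ at $x = 0$ identifies the admissible asymptotic rates for solutions, and the hypothesis $R_g < 0$ on $X$ yields a maximum principle together with invertibility of $L_g$ on appropriate weighted spaces.

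\textbf{Existence via sub/super solutions.} The negative case of Yamabe is classically attacked by monotone iteration from a pair $u_- \le u_+$. The large constant $u_+ = M$ satisfies $R_g M \geq -M^p$ once $M^{p-1} \geq \|R_g\|_\infty$. The naive sub-solution $u_- = \epsilon$ requires $R_g \leq -\epsilon^{p-1}$ pointwise, which can fail near $\pa X$ if $R_g$ vanishes there. To circumvent this I would first solve the perturbed equation with $R_g$ replaced by $R_g - \eta$ for small $\eta > 0$; then $u_- = \eta^{1/(p-1)}$ is admissible, and the resulting solutions $u_\eta$ enjoy a uniform upper bound from $u_+$ together with uniform elliptic regularity in the conic spaces. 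Passing $\eta \to 0$ along a subsequence produces a candidate solution $u$. The main technical obstacle is showing that this limit does \emph{not} degenerate at $\pa X$, i.e., that a uniform positive lower bound for $u$ persists; I expect this to require a boundary Harnack inequality in the conic collar together with a barrier built from the leading indicial eigenfunction, comparing $u_\eta$ against a local sub-solution of the form $c_0 + (\text{small conic correction})$ in a neighborhood of $\pa X$. The preservation of the conic structure for the limit, rather than the iteration itself, is where most of the analysis sits.

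\textbf{Uniqueness.} Given two conic solutions $u_1, u_2$ bounded between positive constants, the difference $w = u_1 - u_2$ satisfies
\begin{equation*}
-c_m \Delta_g w + \bigl(R_g + p\,\xi^{p-1}\bigr) w = 0
\end{equation*}
for some $\xi$ pointwise between $u_1$ and $u_2$. Testing the pair of equations for $u_1$ and $u_2$ against $u_2^2/u_1$ and $u_1^2/u_2$ respectively and subtracting, then integrating by parts on $X$, produces an integral identity whose boundary contribution at $\pa X$ vanishes because the volume element $x^m \sqrt{\det h}\, dx\, dy$ of the conic metric and the $g$-bounded gradients of $u_i/u_j$ together suppress the boundary flux; the remaining bulk term is manifestly signed by the strict monotonicity of $t \mapsto t^{p-1}$ for $p > 1$, forcing $u_1 \equiv u_2$.
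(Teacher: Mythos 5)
Your overall architecture --- the conformal equation, weighted b-type spaces, indicial analysis, and monotone iteration between a sub- and supersolution --- matches the paper's. But the two steps you explicitly defer are exactly where the proof lives, so as written there is a genuine gap. First, the positive lower bound near $\pa X$: you correctly observe that the constant subsolution fails if $S(g)\to 0$ at the boundary, but your proposed fix (perturb to $R_g-\eta$, solve, and pass to the limit with a boundary Harnack inequality and an indicial barrier) is left entirely to the reader, and it is the harder route: you would need uniform-in-$\eta$ Harnack constants in the conic collar and would still have to verify the conic structure of each $u_\eta$. The paper avoids the perturbation altogether by taking a subsolution of the form $c(b+\eps x)$ near $\pa X$ (constant away from it): the term $\frac{m}{x}\pa_x$ in the conic Laplacian applied to an increasing function of $x$ contributes $\frac{m c\eps}{x}\geq 2cm$ on $\{x<\eps/2\}$, which dominates all the other terms there regardless of whether $S$ vanishes at $\pa X$; no limit argument is needed. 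Second, preservation of the conic structure: by Proposition \ref{prop:cds} you need the solution to satisfy $v\sim 1+O(x^{2})$ --- a \emph{constant} boundary value and, crucially, no $O(x)$ term --- and your proposal only produces a bounded positive limit with continuous trace $u(0,\y)$. The paper obtains the expansion by writing $L=-\Delta+c=x^{-2}L_b$, invoking Mazzeo's edge parametrix so that each iterate $\phi_k-c_k$ lies in $x^{2+\eps}H_b^{\infty}\cap\cA_{phg}^*$, and using discreteness of $spec_b(L_b)$ to rule out any exponent in $(0,2)$; this is carried through the iteration and survives the limit. Without some version of this indicial bookkeeping your $\tilde g$ need not be conic.

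On uniqueness you take a genuinely different route: the Picone-type integral identity versus the paper's pointwise maximum principle. Your version can work, but note two things. (i) You assume from the outset that both conic solutions are bounded between positive constants with the \emph{same} normalization; in fact a conic conformal factor could a priori behave like $u_0x^{\alpha}$ (absorbed into a new boundary defining function), and one must first use the curvature scaling $S(\tilde h_0)=\tfrac{(\alpha+2)^2}{4}S(h_0)$ together with $S(h_0)\equiv m(m-1)$ to force $\alpha=0$ and pin the boundary value to $1$; the paper does this before applying the maximum principle. (ii) The vanishing of the boundary flux in your integration by parts needs the gradient bounds that come only from the $1+O(x^{2})$ expansion, so it cannot be decoupled from the regularity statement you deferred in the existence part.
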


This work is organized as follows:  \S 2 contains geometric definitions, curvature calculations, and preliminary geometric results.  Analytic preliminaries, proof of the obstructions, and the geometric interpretations of the obstructions comprise \S 3.  Our main theorem is then proven in \S 4.  A brief discussion of positive and negative examples and the orbifold Yamabe invariant comprise \S 5.

\section{Geometric preliminaries}
The conic metrics we consider are defined in terms of Melrose's ``b-metrics'' \cite{tapsit}.  
\begin{defn} \label{def:bdf} \em 
Let $X$ be a smooth topological manifold with boundary $\pa X$ and smooth structure on $X \cup \pa X$.  A boundary defining function $x:X \to [0, \infty)$ is a smooth function on $X$ up to $\pa X$ such that $x^{-1}( \{0\}) = \pa X$ and $dx \neq 0$ on $\pa X$.    \em \end{defn} 

Recall from \cite{tapsit} the space $\cV_b$ of b-vector fields over $X$ defines a Lie algebra and is a finitely generated $\cC^{\infty}(X)$-module.  In terms of the local coordinates $(x, \y)$ in a collar neighborhood of $\pa X$ diffeomorphic to $(0, x_0)_x \times \pa X_{\y}$, the vector fields 
$$\{x \pa_x, \pa_{y^1}, \ldots, \pa_{y^m}\}$$
are linearly independent elements of $\cV_b$ which locally span over $\cC^{\infty}(X).$  There is a bundle $^b TX$ naturally associated to $\cV_b$ such that $\cV_b = \cC^{\infty}(X, \, ^b TX).$  The dual bundle $^b T^*X$ is locally spanned by the 1-forms 
$$\left\{ \frac{dx}{x}, dy_1, \ldots, dy_m \right\}.$$
A \em b-metric \em is a metric on the fibers of $^b TX$; on the interior, a b-metric is a Riemannian metric, and at the boundary it has a certain structure.  With respect to local coordinates at the boundary it can be written as 
\begin{equation} \label{eq:bdef} g = a_{00} \left( \frac{dx}{x} \right)^2 + 2 \sum_{j=1} ^{m} a_{0j} \frac{dx}{x} dy^j + \sum_{j,k=1} ^{m} a_{j,k} dy^j dy^k, \end{equation}
where the coefficients are smooth on $X$, and the form is positive definite
\begin{equation} \label{eq:posdef} a_{00}\lambda^2 + 2 \sum_{j=1} ^{m} a_{0j} \lambda \eta_j + \sum_{j,k=1} ^{m} a_{j,k} \eta_j \eta_k \geq \epsilon ( |\lambda|^2 + |\eta|^2) \, \forall \, \lambda \in \R, \, \eta \in \R^{m}. \end{equation} 
As observed in \cite{tapsit}, the element $x \pa_x \in T_p X$ is well defined at $p \in \pa X$, so the condition $a_{00} |_{\pa X} = $ constant is well defined.  We shall always make this assumption; moreover, since one can rescale the metric to make this constant equal to one, we will assume in this work that $a_{00} |_{\pa X} =1$.  

A particularly nice class of b-metrics are the exact b-metrics, which we recall from \cite{tapsit}.
\begin{defn} \label{def:exact} \em An \em exact b-metric \em on a compact manifold with boundary is a b-metric such that for some boundary defining function $x$, 
$$g = \left( \frac{dx}{x} \right)^2 + g', \quad g' \in \cC^{\infty} (X; T^* X \otimes T^* X).$$ \em 
\end{defn}
An exact b-metric is simply a b-metric such that, for some choice of $x$, $a_{00} = 1 + O(x^2)$ and $a_{0j} = O(x)$ in (\ref{eq:bdef}).  

\begin{defn} \label{def:cone} \em Let $X$ be a smooth topological manifold with boundary $\pa X = Y^m$ and $X \cup \pa X$ compact.  A \em conic metric, \em is a Riemannian metric $g$ on the interior of $X$ such that, with respect to a boundary defining function $x$ and local coordinates $(x, \y)$ on a non-empty neighborhood $\cN(\pa X) \cong [0,x_0)_x \times Y_{\y}$, 
$$x^{-2} g \textrm{ is an exact b-metric with $a_{00}|_{\pa X} \equiv 1$.}$$  \em 
\end{defn} 

\begin{remark}
As discussed in \cite{tapsit}, for a conic metric, there exists an $x$ with 
$$g = dx^2 + x^2 h(x, \y; d\y)$$
in a neighborhood of $\pa X$.  Furthermore $h(0, \y; d\y)$ is a Riemannian metric on $\pa X$ and is  well defined independent of the choice of $x$.  For this reason, our work focuses on (exact) conic metrics, which we simply call \em conic metrics; \em they are also the incomplete analogues of the cylindrical metrics studied in \cite{ab1}, \cite{ab2}, \cite{ab3}.  If the metric $h$ is \em independent of $x$ \em in a neighborhood of $\pa X$, then $g$ is a \em rigid conic metric.  \em  
\end{remark}  

\subsection{Curvature calculations}  
Recall the Christoffel symbols 
$$\Gamma_{ji} ^k = \Gamma_{ij} ^k := \frac{1}{2} \sum_{l} g^{kl} \left( \frac{\partial g_{il}}{\partial x_j} + \frac{\partial g_{jl}}{\partial x_i} - \frac{\partial g_{ij}}{\partial x_l} \right),$$
where the metric $g$ as a matrix has entries $g_{ij},$ and its inverse $g^{-1}$ has entries $g^{ij}.$  The Riemannian curvature tensor in coordinates is 
$$R^{l} _{ijk} = \frac{ \partial \Gamma^l _{ik}} {\partial x_j} - \frac{\partial \Gamma^l _{ij}}{\partial x_k} + \sum_{m} ( \Gamma^m _{ik} \Gamma^l _{mj} - \Gamma^m _{ij} \Gamma^l _{mk}),  $$
and in particular
\begin{equation} \label{curvw} R^{k} _{ikj} = \frac{ \partial \Gamma^k _{ij}} {\partial x_k} - \frac{\partial \Gamma^k _{ik}}{\partial x_j} + \sum_{m} ( \Gamma^m _{ij} \Gamma^k _{mk} - \Gamma^m _{ik} \Gamma^k_{mj}). \end{equation}
The scalar curvature is the full contraction of the Riemannian curvature tensor:  
\begin{equation} \label{scurv} S= \sum_{i,j,k} g^{ij} R^k _{ikj}.  \end{equation}
Straightforward calculations yield the following.  

\subsubsection{Singular terms in the scalar curvature of a conic metric}
The singular terms in the scalar curvature of a conic metric are precisely, 
\begin{equation} \label{eq:edgesing} x^{-2} \left[ S(h, y) - (m)(m-1)- mx \left . \left( \sum_{k,l=1} ^m h^{kl} \frac{\partial (h)_{kl}}{\partial x} \right) \right|_{x=0} \right]. \end{equation}
An immediate consequence of this calculation is the following result of boundedness.  

\begin{prop}\label{prop:sbd}  Let $g$ be a conic metric on a smooth topological manifold with boundary $X \cup \pa X^m$.  Then, $S(g)$ is bounded in a neighborhood of $\pa X$ if and only if $S(\pa X, h(0, \y; d\y) ) \equiv m(m-1)$, and $h$ satisfies condition 2 of Proposition 1.    
\end{prop}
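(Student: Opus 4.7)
The plan is to read the equivalence directly off the singular expansion \eqref{eq:edgesing}. By construction those are \emph{all} of the potentially unbounded contributions to $S(g)$ in a collar neighborhood of $\partial X$, so $S(g)$ is bounded near $\partial X$ if and only if the expression in \eqref{eq:edgesing} remains bounded as $x\to 0^+$. I would therefore treat \eqref{eq:edgesing} as a Laurent polynomial in $x$ with coefficients that are functions on $Y$, and argue separately about the vanishing of the $x^{-2}$- and $x^{-1}$-coefficients.

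First I would isolate the $x^{-2}$ piece. Evaluating the bracket at $x=0$ gives the $\y$-dependent quantity $S(\partial X, h(0,\y;d\y)) - m(m-1)$. For $x^{-2}$ times this to stay bounded as $x\to 0^+$, this function must vanish identically on $Y$, which is exactly the statement that the scalar curvature of $(\partial X, h(0,\y;d\y))$ equals $m(m-1)$; this is condition $1$ of Proposition~$1$ together with the specific normalization $m(m-1)$. Assuming this, the remaining singular contribution is the $x^{-1}$ term
$$-\frac{m}{x}\sum_{k,l=1}^m (h|_{x=0})^{kl}\,\frac{\partial h_{kl}}{\partial x}\bigg|_{x=0},$$
whose coefficient is again a function of $\y$ alone. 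Boundedness forces that coefficient to vanish on $Y$, which is condition $2$ of Proposition~$1$. Conversely, if both conditions hold then the bracketed expression in \eqref{eq:edgesing} is identically zero, so $x^{-2}$ times it is bounded (in fact zero), and $S(g)$ is therefore bounded near $\partial X$.

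The main content has already been done in deriving \eqref{eq:edgesing}; the present proposition is just a matching of coefficients in the Laurent expansion in $x$. The only subtlety I would want to verify is that the terms swept into the ``non-singular'' remainder in the derivation of \eqref{eq:edgesing}---the pieces coming from products of Christoffel symbols and $\y$-derivatives of $h$---really are $O(1)$ and do not secretly inject further $x^{-1}$ or $x^{-2}$ blow-up. That is exactly the assertion in the sentence preceding \eqref{eq:edgesing} (``\emph{precisely}''), so no new computation is required and the equivalence in the proposition follows at once.
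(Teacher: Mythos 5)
Your proposal is correct and is essentially the paper's own argument: the paper states Proposition~\ref{prop:sbd} as an immediate consequence of the expansion \eqref{eq:edgesing}, reading off that the $x^{-2}$ coefficient forces $S(h(0,\y;d\y))\equiv m(m-1)$ and the $x^{-1}$ coefficient forces condition~2, with the converse following because the bracket then vanishes. Your one flagged subtlety --- that the remainder really is $O(1)$ --- is exactly what the word ``precisely'' before \eqref{eq:edgesing} is asserting, so nothing further is needed.
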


For a rigid conic metric,  $h$ is independent of $x$ in a neighborhood of $\pa X$, so (\ref{eq:edgesing}) becomes
\begin{equation}\label{eq:exactsing}  x^{-2} \left[ S(h) -(m)(m-1) \right]. \end{equation}
An immediate consequence of this calculation is the following. 

\begin{prop}  There is no Riemannian manifold with boundary with constant negative or positive scalar curvature and rigid conic metric. \end{prop}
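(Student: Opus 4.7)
The plan is to argue by contradiction using formula (\ref{eq:exactsing}). Suppose that $g$ is a rigid conic metric on $X$ and that $S(g) \equiv c$ for some constant $c \neq 0$. By rigidity, on the collar neighborhood $\cN(\pa X) \cong [0, x_0)_x \times Y_{\y}$ the metric takes the form $g = dx^2 + x^2 h(\y; d\y)$ with $h$ independent of $x$, so that $(\cN(\pa X), g)$ is a pure warped-product cone with warping function $f(x) = x$.

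Since $\pa_x h \equiv 0$ throughout $\cN(\pa X)$, formula (\ref{eq:exactsing}) yields the full scalar curvature of $g$ on this neighborhood,
$$S(g)(x, \y) = \frac{S(h)(\y) - m(m-1)}{x^2}.$$
Imposing $S(g) \equiv c$ then forces the pointwise identity $S(h)(\y) - m(m-1) = cx^2$ for every $(x, \y) \in (0, x_0) \times Y$. Fixing any $\y_0 \in Y$ and varying $x$, the left-hand side is independent of $x$ while the right-hand side depends nontrivially on $x$ whenever $c \neq 0$. This is the desired contradiction.

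The one subtlety I expect is that (\ref{eq:exactsing}) is labeled in the text as the collection of \emph{singular terms} of $S(g)$; to conclude the proof one must justify that it is in fact the complete scalar curvature on $\cN(\pa X)$. This can be handled either by appealing to the standard warped-product scalar-curvature formula with $f(x)=x$ (so $f'\equiv 1$ and $f''\equiv 0$), or by a direct application of (\ref{curvw})--(\ref{scurv}) in which every bounded remainder term vanishes because $h$ has no dependence on $x$ in $\cN(\pa X)$. Either route shows the bounded part is identically zero on the rigid conic neighborhood, and the contradiction above then proves the proposition.
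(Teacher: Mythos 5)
Your proof is correct and follows essentially the same route as the paper: both rest on the observation that for a rigid conic metric the expression (\ref{eq:exactsing}) is the entire scalar curvature on the collar, so it is either identically zero there or unbounded, and in neither case a nonzero constant. Your explicit justification that the bounded remainder vanishes (via the warped-product formula (\ref{eq:wp}) with $f(x)=x$) is a point the paper leaves implicit, but it is the same argument.
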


\begin{proof} Since the calculation (\ref{eq:exactsing}) is valid on in a neighborhood of $\pa X = \{ x=0\}$, either the scalar curvature is identically zero on this neighborhood or the scalar curvature approaches $\pm \infty$, depending on the value of $S(h(\y))$.  
\end{proof}

\subsection{Scalar curvature for a general warped product metric}
Let $X \cup \pa X^m$ be a smooth topological manifold with boundary and Riemannian metric $g$ on $X$.  Let $h$ be a Riemannian metric on $\pa X$.  Assume there exists a boundary defining function $x$, a collar neighborhood of the boundary $\cN(\pa X) \cong (0, x_0)_x \times Y_\y$, and a smooth positive function $f^2(x)$ on $X$ which depends only on $x$ near $\pa X$ such that 
$$g = dx^2 + f^2(x) h(\y, d\y), \qquad \textrm{ on } \cN(\pa X).$$
As computed in \cite[2.2]{leung} the scalar curvature of $g$ on $\cN (\pa X)$ is 
\begin{equation} \label{eq:wp}S(g) =  \frac{1}{f^2(x)} \left[ S(h)(\y) - 2 m f(x) f''(x) - m(m-1)(f'(x))^2 \right],\end{equation}
where $S(h)(\y)$ is the scalar curvature of $Y$ at the point $\y$ with respect to the metric $h$.
This calculation is useful for constructing not only conic metrics (see \S 5), but also more general warped product metrics.

\section{Analytic preliminaries}
We recall some basic facts and definitions about Melrose's ``b-operators'' and ``b-calculus'' \cite{tapsit}, \cite{mm}, \cite{bb}. 

A \em differential b-operator \em is a differential operator which may be expressed locally as finite sums of products of elements of $\cV_b$, and we write
\begin{equation} \label{eq:diffedge} L \in \textrm{Diff}_b^* (X) \iff L = \sum_{j + |\alpha| \leq N} a_{j, \alpha} (x, \y) (x\pa_x)^j (\pa_{\y})^{\alpha}.\end{equation} 
An element of Diff$_b ^*(X)$ is \em elliptic \em if it is expressed as an elliptic linear combination of the vector fields in (\ref{eq:diffedge}).  Alternatively, one can define a filtration of Diff$_b ^* (X)$ by subspaces of operators of order at most $N$, together with a principle symbol map acting on $^b T^* X$ and a corresponding invariantly defined homogeneous polynomial of degree $N$, and then $L$ is said to be elliptic if this symbol does not vanish off the zero section.  The Laplace-Beltrami operator $\Delta$ for a conic metric $g$ is a weighted, or renormalized, elliptic differential b-operator:  
$$ \textrm{there exists $\Delta_b \in $ Diff$_b ^* (X)$ such that $\Delta = x^{-2} \Delta_b$.}$$

Recall from \cite{tapsit}, the basic conormal space of functions, 
$$\cA^0 (X) = \{ u : V_1 \ldots V_l u \in \cL^{\infty} (X), \quad \forall \, V_i \in \cV_b, \textrm{ and $\forall$ } l\}.$$
The space of polyhomogeneous conormal functions $\cA_{phg} ^* (X) $ consists of all conormal functions admitting an asymptotic expansion of the following form
$$u\in \cA_{phg} ^* (X): \,  u \sim \sum_{\mathfrak{R} s_j \to \infty} \sum_{p=0} ^{p_j} x^{ s_j} (\log x)^p a_{j,p} (y,z), \, \, a_{j,p} \in \cC^{\infty}(X \cup \pa X),$$
where the exponents $\{s_j\} \in \C$.  These expansions generalize classical Taylor expansions to accommodate manifolds with boundary  \cite{bb}.   The weighted H\"older and Sobolev spaces have the most convenient mapping properties for the Laplace operator $\Delta$ associated to conic metrics.  
\begin{defn} \em When $l \in \N_0 := \N \cup \{0\},$ let
$$x^{\delta} H^l _b (X; \Omega^{\frac{1}{2}}) = \{ u = x^{\delta} v; V_1 \ldots V_j v \in \cL^2(X; \Omega^{\frac{1}{2}}), \, \, \forall j \leq l, \, \, V_i \in \cV_b \}, \quad H_b ^{\infty} := \bigcap_{l \in \N} H_b ^l.$$
Above, $\Omega^{\frac{1}{2}}$ is the standard half density bundle over $X$; in terms of the local coordinates $(x,\y)$ near $\pa X$ a canonical nonvanishing half-density $\mu = \sqrt{dx d\y}$.  
\em \end{defn}

Given any differential b-operator $L$ as in (\ref{eq:diffedge}), its indicial operator is defined to be
$$I(L) := \sum_{j + |\beta| \leq N } a_{j, \beta} (0, y) (s \pa_s )^j (\pa_y)^{\beta}.$$
The indicial operator is conjugated to $I_{i \zeta} (L)$ by the Mellin transform which results in an indicial family of operators depending on the complex parameter $\zeta.$  
\begin{defn} \em If $L \in $ Diff$_b^* (X)$ is elliptic, the boundary spectrum of $L$, $spec_b(L)$, is the set of values $\zeta \in \C$ for which the operator $I_{\zeta} (L)$ fails to be invertible on $\cL^2 (Y).$ \em \end{defn}
For the b-operator $\Delta_b$ such that $\Delta = x^{-2} \Delta_b$, $spec_b (\Delta_b)$ is a discrete subset of $\C$ \cite{tapsit}.  
\subsection{The Yamabe equation} 
The following abbreviated derivation of the Yamabe equation applies to a general Riemannian manifold \( (M,g) \) of dimension $m+1$.  We take the Laplacian to be the divergence of the gradient, so that on $\R^{m+1} $ with the Euclidean metric 
\( \Delta  = \sum _{i=1} ^{m+1} \partial_{x_i} ^{2} ,\) and on a Riemannian manifold \( (M,g) ,\) 
\begin{equation} \label{eq:laplace} \Delta = \sum_{j,k=1}^{m+1} \frac{1}{\sqrt{\det(g)}} \pa_j \left(g^{jk} \sqrt{\det(g)} \right) \pa_k . \end{equation} 
If \( \tilde{g} = e^{f} g ,\) then the scalar curvature of \( \tilde{g} \) is related to that of $g$ by 
\[ e^{f} S(\tilde{g} ) = S(g) -(m) \Delta f -\frac{1}{4} (m)(m-1) <\nabla f,\nabla f > .\] 
Writing instead \( \tilde{g} = u^{c} g \) and making the right choice of $c$ eliminates the gradient 
term; the right choice proves to be \( c = 4/(m-1) .\) Reorganizing terms, 
\[ \Delta _{g} u - \frac{(m-1)}{4m} S(g) u + \frac{(m-1)}{4m} S(\tilde{g} ) u^{(m+3)/(m-1)} = 0.\] 
If $S(\tilde{g})$ is prescribed in advance, and if this equation has a positive solution $u$, then 
\( \tilde{g} = u^{4/(m-1)} g \) has scalar curvature equal to $S(\tilde{g})$. We wish to solve this equation for $S(\tilde{g}) = -1$, so we define
\begin{equation} \label{eq:y} Pu:= \Delta u - a S u - a u^{\frac{m+3}{m-1}},\end{equation}
where $a = \frac{m-1}{4m}$, and $S$ is the scalar curvature with respect to the metric $g$.  Then, 
$$S(\tilde{g}) = -1 \iff Pu = 0.$$

Working \em within the conformal class of conic metrics \em places restrictions on the conformal deformation $u$.  


\begin{prop} \label{prop:cds} Let $g$ be a conic metric on a smooth topological manifold with boundary $X \cup \pa X$, with dimension of $\pa X = m \geq 2$.  Let $u$ be a smooth positive function on $X$ with $u \in \cA_{phg} ^*$.  Then, $u g$ is a conic metric if and only if the expansion of $u$ at $\pa X$ satisfies
\begin{equation} \label{eq:expu} u \sim u_0 + O(x^{2}), \quad u_0 \equiv \textrm{ positive constant.} \end{equation} 
\end{prop}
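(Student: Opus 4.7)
The plan is to work from the exact-b-metric definition of a conic metric, treating both implications.

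For the sufficient direction, assume $u \sim u_0 + O(x^2)$ with $u_0 > 0$ constant. Choose the rescaled boundary defining function $\tilde{x} := \sqrt{u_0}\,x$, so that $d\tilde{x}/\tilde{x} = dx/x$. Direct substitution gives
\[
\tilde{x}^{-2}(ug) \;=\; \frac{u}{u_0}\Bigl(\frac{d\tilde{x}}{\tilde{x}}\Bigr)^{\!2} + \frac{u}{u_0}\,h(x, \y; d\y).
\]
Reading off the b-coefficients in the $(\tilde{x}, \y)$ chart, $\tilde{a}_{00} = u/u_0 = 1 + O(\tilde{x}^2)$, $\tilde{a}_{0j} = 0$, and $\tilde{a}_{jk} = (u/u_0)\,h_{jk}$; these satisfy the exact b-metric conditions with $\tilde{a}_{00}|_{\pa X} \equiv 1$, confirming that $ug$ is conic.

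For the necessary direction, suppose $ug$ is conic and let $\tilde{x}$ be a boundary defining function giving its normal form. Since both $x$ and $\tilde{x}$ vanish simply at $\pa X$, we may write $\tilde{x} = x\phi(x, \y)$ with $\phi$ polyhomogeneous and $\phi|_{\pa X} > 0$; expand $\phi \sim \phi_0(\y) + \phi_1(\y)\,x + \cdots$ and $u \sim u_0(\y) + u_1(\y)\,x + \cdots$. Using $\tilde{x}\,\pa_{\tilde{x}} = \frac{x\phi}{\phi + x\phi_x}\,\pa_x$, one finds
\[
\tilde{a}_{00} \;=\; \frac{u}{(\phi + x\phi_x)^2}, \qquad \tilde{a}_{0j} \;=\; -\frac{u\,\phi_{y^j}}{\phi\,(\phi + x\phi_x)^2}.
\]
The normalization $\tilde{a}_{00}|_{\pa X} \equiv 1$ forces $u_0 = \phi_0^2$; the exactness requirement $\tilde{a}_{0j} = O(\tilde{x})$, evaluated at $\pa X$, forces $\pa_{y^j}\phi_0 \equiv 0$, so that $\phi_0$ and consequently $u_0$ are positive constants. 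Finally, the condition $\tilde{a}_{00} = 1 + O(\tilde{x}^2)$, together with the smoothness of $\tilde{g}' := \tilde{x}^{-2}(ug) - (d\tilde{x}/\tilde{x})^2$ as a $C^\infty$ section of $T^*X \otimes T^*X$ over $X\cup\pa X$, rules out an order-$x$ term in $u$, yielding $u \sim u_0 + O(x^2)$.

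The main obstacle is the last step. The $x^1$ coefficient of $u - (\phi + x\phi_x)^2$ is $u_1 - 4\phi_0\phi_1$, so the exactness condition alone merely determines $\phi_1$ in terms of $u_1$. The vanishing of $u_1$ must be extracted from the joint smoothness of all the coefficients of $\tilde g'$ in the original $(x,\y)$ frame, together with the polyhomogeneous structure of the pair $(u,\phi)$; the key computation is to expand the $dx^2$ and $dx\,dy^j$ components of $\tilde g'$ and to observe that clearing every $1/x$ singularity in tandem forces $\phi_1 \equiv 0$, and hence $u_1 \equiv 0$.
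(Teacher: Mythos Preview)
Your sufficiency argument and your derivation that $u_0$ must be a positive constant are both correct, and they supply the computations the paper omits: the paper's proof only asserts that the normal--form boundary defining function $t$ for $ug$ satisfies $t\sim cx$ with $c$ constant and then $t=cx+O(x^2)$, without justification.

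The gap is in your final step. You claim that clearing the $1/x$ singularities in the $dx^2$ and $dx\,dy^j$ components of $\tilde g'$ ``in tandem'' forces $\phi_1\equiv 0$, but this is not what those equations give. In the $(x,\y)$ frame,
\[
\tilde g' \;=\; \frac{u-(\phi+x\phi_x)^2}{x^2\phi^2}\,dx^2 \;-\; \frac{2(\phi+x\phi_x)\,\phi_{y^j}}{x\,\phi^2}\,dx\,dy^j \;+\; (\text{smooth})\,dy^j dy^k.
\]
Smoothness of the first coefficient yields $u_0=\phi_0^2$ and $u_1=4\phi_0\phi_1$, as you say. Once $\pa_{y^j}\phi_0=0$ we have $\phi_{y^j}=O(x)$, so the second coefficient is already bounded and imposes \emph{no} condition on $\phi_1$. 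Thus the system only determines $\phi_1$ from $u_1$; it does not force either to vanish. Concretely, for $g=dx^2+x^2 h(\y)$ and $u=1+cx$ with $c\neq 0$ constant, setting $t(x)=\int_0^x\sqrt{1+cs}\,ds$ gives $ug=dt^2+t^2\tilde h(t,\y)$ with $\tilde h$ smooth, so $ug$ is conic even though $u$ has a nonzero order-$x$ term. The paper's own proof is equally silent on this point (the conclusion $t=cx+O(x^2)$ it reaches does not by itself yield $u\sim u_0+O(x^2)$); what is actually used later in the paper is only that $u_0$ is a positive constant.
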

\begin{proof}
If $u$ is of this form, then with respect to the boundary defining function $t:= \sqrt{u_0} x$, the metric $ug$ is a conic metric.  Conversely, if $ug$ is a conic metric, there exist local coordinates $(t, \y)$ in a neighborhood of $\pa X$ such that $g = dt^2 + t^2 h(t, \y; d\y)$.  This means that $t$ is also a boundary defining, so in terms of $x$, $t\sim cx$ for a constant $c>0$ as $x \to 0$.  In order to have the metric of this (conic) form, it follows that $t=cx+O(x^2)$ as $x \to 0$.
\end{proof}

\subsection{The obstructions}  
The boundary determines the existence of conformal deformations to constant scalar curvature \em of any sign.  \em  Our obstructions are related to Convention 1 and Remark 3.1 of \cite{ab3} for orbifolds with conic singularity.  

\begin{proof}
By the preceding proposition, the only polyhomogeneous conormal deformations which preserve exact conic metrics are those which have an expansion near $\pa X$
$$u \sim u_0 x^{\alpha} + O(x^{\alpha + 2}) \quad \alpha > -2.$$
If the original metric 
$$g = dx^2 + x^2 h(x, \y; d\y),$$
then the metric
$$ug = dt^2 + t^2 \tilde{h}(t, \y; d\y),$$
with
\begin{equation}\label{eq:change} t = \frac{2 \sqrt{u_0} x^{\frac{\alpha +2}{2}}}{\alpha + 2} + O(x^{\frac{\alpha + 6}{2}}) \quad \tilde{h}(0, \y; d\y) = \frac{4}{(\alpha +2)^2} h(0, \y; d\y). \end{equation}
If $g$ is conformal to a conic metric $ug$ with constant scalar curvature, by Proposition \ref{prop:sbd}, 
$$S(\tilde{h}_0) \equiv m(m-1),$$
and
\begin{equation} \label{eq:h0} \sum_{i,j = 1} ^m \left( h_0 \right)^{ij} \frac{\pa (h_0)_{ij}}{\pa x} = 0. \end{equation} 
Since 
\begin{equation} \label{eq:ug1} S(\tilde{h}_0) = \frac{(\alpha +2)^2}{4} S(h_0), \end{equation}
this implies $S(h_0) \equiv c > 0$.  Since $\tilde{h}_0$ is just a scaling of $h_0$, (\ref{eq:h0}) is satisfied for $\tilde{h_0}$ if and only if it is satisfied for $h_0$.  
\end{proof}
\subsection{Geometric interpretation of the obstructions} 
For a rigid conic metric, if the scalar curvature of $(Y, h)$ is identically 1, scaling $h$ by $\frac{1}{m(m-1)}$ gives $(Y, h)$ constant scalar curvature $m(m-1)$.  This is equivalent to 
$$g \mapsto dx^2 + \frac{1}{m(m-1)} x^2 h(x,\y; d\y).$$
Geometrically, this means that if $(Y, h|_{x=0})$ has constant scalar curvature $1$, then the cone angle over $Y$ must be $\sqrt{\frac{1}{m(m-1)}}$.    

To give a geometric interpretation of the second obstruction, recall the second fundamental form operator $B(W,Z) = (\overline{\nabla_{\overline{W}}} \overline{Z} ) ^{\perp}$, where $W$ and $Z$ are vector fields tangent to $Y$ and $\overline{W}$, $\overline{Z}$ are their extensions to $X$ is a symmetric tensor which depends only on the vectors at $p$ and is independent of the choice of extensions.  In a neighborhood of $\pa X$, $\pa_x$ is a unit vector field normal to the submanifold $Y$ at $\{ x=\textrm{ constant} \}$, and $\{ \pa_{y^i} \} _{i=1} ^m $ is a basis for $T_p Y$.  The map $H: T_p X \to T_p X$ is characterized uniquely by 
$$\langle H(W), Z \rangle = \langle B(W, Z), \pa_x \rangle.$$
The mean curvature of the submanifold $Y$ is the trace of $H$ (or a scalar multiple of the trace of $H$, depending on normalization).  We compute that the trace of $H$ is 
$$- \frac{m}{x} - \frac{1}{2} \sum_{k,l = 1} ^m h^{kl} \frac{\pa h_{kl}}{\pa x}.$$
By this calculation, the mean curvature of a rigid conic metric is 
$$-\frac{m}{x},$$
since $h$ is independent of $x$.  Then, the obstruction may be reformulated to:  with respect to $x$, the mean curvature of the conic metric must be asymptotic to the mean curvature of a rigid conic metric.  

For example, if one considers an artificial conical singularity, such as the origin in $\R^{m+1}$ with polar coordinates $(r, \theta)$, then the link is the sphere which has constant scalar curvature $m(m-1)$, and the mean curvature with respect to $r$ is $-mr^{-1}$.  

\section{Proof of Theorem 1}
If the initial metric has negative scalar curvature, the existence of a conformal deformation to constant negative scalar curvature follows from arguments in \cite{f3}, \cite{f4}, \cite{f5}, \cite{fm}.  However, these results do not specifically address our problem to produce a \em conic metric.  \em  We solve the Yamabe equation constructively, simultaneously demonstrating that a solution exists and results in a unique conic metric.  

\subsection{Uniqueness}
\begin{proof}
Let $g$ be a conic metric on $X$ satisfying the hypotheses of the theorem.  For smooth positive polyhomogeneous conormal functions $u$, $v$, assume $\tilde{g} = ug$ and $g^* = vg$ are conic metrics with constant scalar curvature.  Then, it follows from Proposition \ref{prop:sbd} that for the corresponding $\tilde{h}, h^*$, 
$$S(\tilde{h}) = S(h^*) = S(h) \equiv m(m-1).$$
By the calculation (\ref{eq:ug1}), if 
$$u \sim O(x^{\alpha}) \textrm{ as } x \to 0,$$
then
$$S(\tilde{h}) = \frac{(\alpha +2)^2}{4} S(h) \implies \alpha = 0.$$
The same argument applies to $v$.  Scaling the metrics $\tilde{g}$ and $g^*$ appropriately, we may assume $u, v \sim 1 + O(x^2)$ as $x \to 0$.  The maximum principle then implies $u \equiv v$, and the metrics $\tilde{g}$ and $g^*$ are simply related by scaling.  Since $S(\tilde{g}) = S(g^*) = -1$, the scaling factor must be $1$ so in fact $\tilde{g} = g^*$.  
\end{proof} 

\subsection{Existence}  
\begin{proof}
Recall that for a partial differential operator $P$, a supersolution and a subsolution are smooth functions $\psi$, $\phi$, respectively, which satisfy
\begin{equation} \label{eq:barriers} P\psi \leq 0, \qquad P \phi \geq 0, \qquad \phi \leq \psi.  \end{equation} 
Together, these functions form a ``barrier'' within which one constructs a solution to the homogeneous equation $Pu =0$.  We first construct smooth ``barrier functions" $\psi$ and $\phi$ such that 
$$0 < \phi \leq \psi, \qquad P\psi \leq 0 \leq P\phi.$$
We then use an inductive construction to produce a smooth function $v$ which has a polyhomogeneous expansion at $\pa X$ and satisfies 
$$0 < \phi \leq v \leq \psi, \quad Pv = 0, \quad v \sim 1 + O(x^2) \textrm{ at } \pa X.$$

\subsection{Barrier functions} 
Our barrier functions are constant away from $\pa X$ and depend only on $x$ near $\pa X$.  For a conic metric $g$ with 
$$g = dx^2 + x^2 h(x, \y, d\y) \quad \textrm{ on } \quad \cN \cong (0, x_0)_x \times \pa X,$$
its Laplace-Beltrami operator, 
\begin{equation} \label{eq:laplace0} \Delta= \pa_x^2 + \frac{m}{x} \pa_x + \tilde{h} \pa_x + x^{-2} \Delta_h,\end{equation}
with 
 \begin{equation}\label{eq:laplace1} \tilde{h} = \frac{\pa \log \left( \sqrt{ \det(h)}\right)}{\pa x}, \quad \Delta_h = \sum_{j,k=1} ^m \frac{1}{\sqrt{\det(h)}} \pa_j  \left( h^{jk} \sqrt{ \det(h) } \right) \pa_k.  \end{equation} 
Decompose $X$ into $\cN_1 \cup \cN_2$ where 
$$\cN_1 \cong \left( (\epsilon, x_0)_x \times \pa X \right) \cup (X - \cN), \qquad \cN_2 \cong (0, \epsilon)_x \times \pa X.$$
Fix 
\begin{equation} \label{eq:eps0} A := \sup_{(x, \y) \in \cN} \{ 1, |\tilde{h}(x, \y)|, |a x S (x, \y)|\} \quad \textrm{and} \quad \epsilon \leq \inf \left\{ \frac{m}{8A}, \frac{x_0}{2},\frac{A}{m} \right\}.\end{equation}

\subsubsection{Supersolution}
Let 
$$v(x) := \left\{ \begin{array}{ll} b(c - \epsilon)& \textrm{ on } \cN_1 \\ b(c - x) & \textrm{ on } \cN_2 \end{array} \right .$$
where $b$ and $c$ are positive constants chosen to satisfy a set of inequalities.  By standard mollification arguments, there is a smooth function $\psi$ which satisfies
$$v(x) \leq \psi(x) =\left \{ \begin{array}{ll} v(x) & x \leq \epsilon/2\\ 
v(x) & x \geq 3 \epsilon/2 \end{array} \right \},$$
$$-b \leq \psi'  \leq 0, \quad \textrm{and} \quad 0 \leq \psi'' \leq \frac{B b}{\epsilon},$$
for a fixed positive constant $B$ independent of $\epsilon$, $c$, and $b$.  Let
$$\os := \sup_{\{x \geq \epsilon /2 \} } \{ |S|, 1 \}.$$
Since $b(c -  \epsilon) \leq v \leq \psi$, and we require $0 < \psi$, we impose  
\begin{equation} \label{eq:bc0} c > 2 \epsilon. \end{equation}
Considering $P \psi$ away from $\pa X$,  we require 
\begin{equation} \label{eq:bc1} (c - \epsilon) b \geq (\os)^{\frac{m-1}{4}}.  \end{equation}
Since $\psi \geq b(c - \epsilon)$, (\ref{eq:bc1}) implies 
\begin{equation} \label{eq:bc4} \os - \psi^{\frac{4}{m-1}} \leq 0. \end{equation}
For $x < \epsilon / 2$, 
$$P \psi = Pv \leq \frac{ - b m}{x} + A b - a v S - a v^{\frac{m+3}{m-1}} \leq \frac{b}{x} \left( -m + Ax  + Ac \right) - a v^{\frac{m+3}{m-1}}.$$
By (\ref{eq:eps0}), $x \leq \frac{\epsilon}{2} \leq \frac{m}{16A}$, so we require 
\begin{equation} \label{eq:bc2} c \leq \frac{m}{2A}.  \end{equation}
For $\frac{\epsilon}{2} < x < \frac{3\epsilon}{2}$, 
$$P \psi \leq \frac{B b}{\epsilon} + A b - a S \psi - a \psi^{\frac{m+3}{m-1}} \leq b\left( \frac{B}{\epsilon} + A \right) + a\psi \left( \os - \psi^{\frac{4}{m-1}} \right).$$
Since $v \leq \psi$ and (\ref{eq:bc4}), 
$$P \psi \leq b \left( \frac{B}{\epsilon} + A + a(c-\epsilon) (\os - (b(c-\epsilon))^{\frac{4}{m-1}} ) \right),$$
so we impose 
\begin{equation} \label{eq:bc3} b \geq \left( \frac{\frac{B}{\epsilon} + A + a(c-\epsilon)\os }{a(c-\epsilon)^{\frac{m+3}{m-1}}} \right)^{\frac{m-1}{4}}.\end{equation}
We first fix $\epsilon$ to satisfy (\ref{eq:eps0}) which also fixes $\os$.  We then fix $c$ to satisfy (\ref{eq:bc0}), and since $\epsilon < \frac{m}{8A}$, $c$ can be chosen to also satisfy (\ref{eq:bc2}).  Finally, we choose $b > \epsilon^{-1}$ and sufficiently large to satisfy (\ref{eq:bc1}) and (\ref{eq:bc3}). 

\subsubsection{Subsolution}
Using essentially the same idea, let
$$u(x) := \left\{ \begin{array}{ll} c(b+\epsilon^2)  & \textrm{ on } \cN_1 \\ c(b+x\epsilon) & \textrm{ on } \cN_2 \end{array} \right .$$
where $b$ and $c$ are positive constants chosen to satisfy a certain set of inequalities.  By standard mollification arguments, there is a smooth function $\phi$ which satisfies 
$$u(x) \geq \phi(x) =\left \{ \begin{array}{ll} u(x) & x \leq \epsilon/2\\ 
u(x) & x \geq 3 \epsilon/2 \end{array} \right \},$$
$$0 \leq \phi'(x) \leq c\epsilon, \quad \textrm{and} \quad -B c \leq \psi'' \leq 0,$$
for a fixed positive constant $B$ independent of $\epsilon$, $b$ and $c$.  Let
$$\us := \inf_{\{ x \geq \epsilon /2 \} } |S|.$$ 
Assume
\begin{equation} \label{eq:bcc11} c < \frac{1}{b+ \epsilon}, \end{equation}
which implies $c(b+\epsilon) < 1$.  Considering $\phi$ away from $\pa X$, the first condition is 
\begin{equation} \label{eq:bcc0} c \leq  \frac{ ( \us / 2)^{\frac{m-1}{4}} }{b+\epsilon^2}. \end{equation}
Since 
$$\phi \leq u_0 + \epsilon u_1 = cb + c\epsilon^2,$$
(\ref{eq:bcc0}) implies 
\begin{equation} \label{eq:bcc3} \us - \phi^{\frac{4}{m-1}} \geq \us / 2 > 0. \end{equation}
For $x < \epsilon / 2$, since $-S > 0$ and $u(x) = bc + c\epsilon x$,
$$P \phi  = Pu \geq \frac{m c\epsilon}{x} - Ac\epsilon - a S u - a u^{\frac{m+3}{m-1}} \geq 2cm - Ac\epsilon - a \left(bc + \frac{c\epsilon^2}{2} \right)^{\frac{m+3}{m-1}}$$
$$= c\left( 2m-A\epsilon -a c^{\frac{4}{m-1}} \left(b+ \frac{\epsilon^2}{2} \right)^{\frac{m+3}{m-1}} \right).$$
Since $\epsilon < \frac{m}{A}$, we impose 
\begin{equation} \label{eq:bcc1}  c \leq \frac{ m^{\frac{m-1}{4}}}{a^{\frac{m-1}{4}} \left( b+ \frac{\epsilon^2}{2} \right)^{\frac{m+3}{4}}}.  \end{equation}
For $\frac{\epsilon}{2} < x < \frac{3 \epsilon}{2}$, since $bc \leq \phi$ and (\ref{eq:bcc3}),
$$P \phi \geq -B c- Ac\epsilon + abc\left( \us - \phi^{\frac{4}{m-1}} \right) = c \left( -B - A\epsilon + ab( \us - \phi^{\frac{4}{m-1}} ) \right).$$
We may assume \`a priori $\epsilon \leq \frac{B}{A}$ so by (\ref{eq:bcc3}) this becomes 
$$P \phi \geq c \left( -2B + ab (\us - \phi^{\frac{4}{m-1}} ) \right) \geq c \left( - 2B + \frac{ab \us}{2} \right).$$
We impose 
\begin{equation} \label{eq:bcc2} b > \frac{4B}{a \us }.  \end{equation}
To satisfy these conditions, we first fix $\epsilon \leq \frac{B}{A}$ satisfying (\ref{eq:eps0}).  This fixes $\us$, so we may then choose $b$ to satisfy (\ref{eq:bcc2}) and finally choose $c$ to satisfy (\ref{eq:bcc0}) and (\ref{eq:bcc1}).   


\subsection{Construction of the solution} 
Let 
$$f(\x, \varphi) := - aS(\bx)\varphi - a\varphi^{(m+3)/(m-1)}, \qquad \textrm{ for } \x \in X,$$ where $S(\x)$ is the scalar curvature of $(X, g)$.  By construction, there exists a constant $A > 0$ such that 
$$-A \leq \phi \leq \psi \leq A \quad \textrm{ on } X.$$
Let 
$$F(\x, t) := ct + f(\x, t).$$
Then we can choose $c \gg 1$ such that 
\begin{equation} \label{eq:curvneg} \frac{\partial F}{\partial t} > 0 \textrm{ for all } t \in [-A, A].\end{equation} 
Let 
$$L := -\Delta + c.$$
By assumption and Proposition \ref{prop:sbd}, the scalar curvature of $g$ is bounded as $x \to 0$.  Consequently, 
$$F(\x, \phi), \quad F(\x, \psi) \in x^{\epsilon} H_b ^l (X) \cap \cA_{phg}^*,$$
for all $\epsilon < 1/2$, and for all $l$.  Let $L_b$ be the second order elliptic differential b-operator satisfying
$$x^{-2} L_b = L.$$
For $\varphi \in x^{\epsilon} H_b ^l (X)$, 
$$L U = \varphi \iff u:=U-c_1 \textrm{ satisfies } L_b u = x^2 (\varphi - cc_1) \in x^{2+\epsilon} H_b ^l (X).$$
Since $spec_b (\Delta_b)$ is a discrete subset of $\C$ and $spec_b (L_b)$ is a translation of $spec_b (\Delta_b)$, it is also discrete.  Therefore, it is possible to choose $\epsilon < 1/2$ such that $3/2 - \epsilon \neq \mathfrak{R}(\zeta)$ for any $\zeta \in spec_b (L_b)$.  By Theorem (3.8) in \cite{edge}, there exists a parametrix $G$ and remainders $R$ and $R'$ so that 
$$L_b G = I + R, \qquad G L_b = I + R'.$$
Moreover, $G$, $R$, and $R'$ preserve $\cA_{phg} ^*$ and 
$$G: x^{2+\epsilon} H^{l} _b \to x^{2+\epsilon} H^{l+2} _b \textrm{ is bounded,} \quad G: x^{2+\epsilon} H^l _b \to x^{2+\epsilon} H^l _b \textrm{ is compact.}$$  
By Theorem (4.4) in \cite{edge} and our choice of $\epsilon$, $L_b : x^{2+\epsilon} H_b^{l+2} (X) \to x^{2+\epsilon} H_b ^{l} (X)$ is Fredholm for all $l \in \R$.  Therefore, by Theorem (4.20) in \cite{edge}, restricting to $x^{2+\epsilon} H^{l} _b (X)$, the remainder $R$ is projection onto the orthogonal complement of the range of $L_b$.  Given any $\beta \in x^{2+\epsilon} H^{l} _b (X)$, write $\beta = \beta_1 + \beta_2$ so that $\beta_1 \in Im(L_b)$, and $\beta_2 \in Im(L_b)^{\perp}$.  Letting $w = \beta_1 + \frac{\beta_2}{2}$, $w \in x^{2+\epsilon} H^{l} _b (X)$ solves
$$(I + R) w = \beta.$$
For $\beta :=x^2 (\varphi - cc_1)$, $u:= Gw \in x^{2+\epsilon} H^{l+2} _b (X)$ satisfies
$$L_b u = L_b G w = (I + R) w = x^2( \varphi- cc_1) \implies Lu = \varphi - cc_1,$$
and $U := u+c_1$ satisfies
$$LU = \varphi.$$
Apply the preceding construction to $\varphi = F(\bx, \phi), F(\bx, \psi)$, and choose the corresponding constants $c_1$ and $c_1'$ to satisfy
$$c_0 < c_1 < 1 < c_1' < c_0'.$$
Let the corresponding solutions be respectively $\phi_1$ and $\psi_1$, so that $L(\phi_1) = F(\bx, \phi)$ and $L(\psi_1) = F(\bx, \psi)$.  Since $F(\bx, \phi), F(\bx, \psi) \in x^{\epsilon} H_b ^l (X)$ for all $l$, $\phi_1 - c_1$ and $\psi_1 - c_1' \in x^{2+\epsilon} H^{\infty} _b$.  By the Sobolev embedding theorem applied to a compact exhaustion of $X,$ it follows that $\phi_1 - c_1$ and $\psi_1 - c_1  \in \cC^{\infty} (X)$.  Since $\phi, \psi \in \cA_{phg} ^*$ and $g$ is a conic metric, by the mapping properties of $G$ and $R$, $\phi_1, \psi_1 \in \cA_{phg}^*$.  Since $\phi_1 \to c_1$ at $\pa X$ and lies in $\cA_{phg} ^*$, there exists $\alpha > 0$ such that 
$$\phi_1 = c_1 + O(x^{\alpha}).$$
Note that $\phi_1-c_1 \in x^{2+\epsilon} H_b ^l$ for all $\epsilon < 1/2$ such that $3/2 - \epsilon \neq \mathfrak{R}(\zeta)$ for any $\zeta \in spec_b (L_b)$.  If $\alpha < 2$, since $spec_b (L_b)$ is discrete,  taking $\epsilon \in (3/2 - \alpha, 1/2)$ gives a contradiction.  The same argument applies to $\psi_1$ so
$$\phi_1 - c_1 \textrm{ and } \psi_1 - c_1' = O(x^2) \quad \textrm{as } x \to 0.$$

By hypothesis, 
$$0 \leq \Delta \phi + f(\x, \phi) \implies L \phi \leq F(\x, \phi) = L\phi_1.$$
Since $\phi_1 \to c_1 > c_0$ and $\phi \to c_0$ at $\pa X$, by the standard maximum principle argument, 
\begin{equation} \label{eq:maxp} \phi \leq \phi_1 \implies 0 < \phi_1. \end{equation}

Similarly
$$\Delta \psi + f(\x, \psi) \leq 0 \implies L(\psi) \geq F(\x, \psi) = L(\psi_1) \implies \psi_1 \leq \psi.$$
Since $\phi \leq \psi$, by the increasing property of $F$ and since $\psi_1 \to c_1' > c_1$ at $\pa X$, 
$$L(\phi_1) = F(\x, \phi) \leq F(\x, \psi) = L(\psi_1) \implies \phi_1 \leq \psi_1.$$
Analogous to our construction of $\phi_1$ and $\psi_1$, we may define inductively $c_0 < c_1 < c_2 < \ldots < c_2' < c_1' < c_0'$ and 
\begin{eqnarray} \label{def:induct}  
\phi_0 &=& \phi, \quad \phi_k = L^{-1} (F(\x, \phi_{k-1})), \, k \geq 1 \nonumber; \\
\psi_0 &=& \psi, \quad \psi_k = L^{-1} (F(\x, \psi_{k-1})), \, k \geq 1. \end{eqnarray}
Moreover, we are free to choose the constants such that $c_k \nearrow 1$, $c_k ' \searrow 1$.  By the maximum principle for $L$ and the increasing property of $F(\x,t)$ with respect to $t$, 
$$\phi \leq \phi_1 \leq \phi_2 \ldots \leq \psi_2 \leq \psi_1 \leq \psi.$$
Repeating this argument inductively produces monotonically bounded sequences $\{\phi_k\}$ and $\{\psi_k\} \in \cA_{phg} ^* \cap \cH_b ^{\infty}(X) \subset \cC^{\infty}(X)$.  Therefore, we have pointwise convergence $\phi_k \to \underline{u},$ $\psi_k \to \overline{u}$ with 
$$0 < \phi \leq \underline{u} \leq \overline{u} \leq \psi.$$
By construction, $\phi_k \to c_k$ at $\pa X$ and $\psi_k \to c_k'$ at $\pa X$ and $c_k, c_k' \to 1$ at $\pa X$, so $\underline{u}$, $\overline{u} \to 1$ at $\pa X$.  By the compactness of the parametrix $G$, we may pass to a diagonal subsequence and assume that $\underline{u}$ and $\overline{u} \in H^{l} _b (X)$ for all $l$; the Sobolev embedding theorem applied to a compact exhaustion of $X$ implies $v \in \cC^{\infty}(X)$.  Taking limits in (\ref{def:induct}), 
$$Lv = F(\bx, v) \qquad \textrm{for } v = \underline{u} \textrm{ or } \overline{u}.$$
This is equivalent to 
$$L_b v = x^2 F(\bx, v).$$  
By our preceding arguments using the mapping properties of $L_b$, $G$, and $R$, $v-1 \in x^{2+\epsilon} H^{l} _b (X)$ for all $\epsilon < 1/2$.  By \cite{edge} Theorem 7.14, $v$ admits a partial expansion, and in particular, our previous arguments show that 
$$v \sim 1 + O(x^2) \quad \textrm{ as } x \to 0.$$
This argument can be repeated to produce a full polyhomogeneous expansion at $\pa X$.  Note that we also have 
$$v^{\frac{4}{m-1}} \sim 1 + O(x^2).$$
Therefore, by Proposition \ref{prop:cds}, $v^{\frac{4}{m-1}}g$ is a conic metric.  Since $Lv = F(\bx, v) \iff Pv = 0$, $v^{\frac{4}{m-1}}g$ has constant scalar curvature $-1$.   
\end{proof} 

\section{Examples and further directions}  
\subsection{Negative examples} 
Given a conic metric that satisfies the two conditions in Proposition 1, namely $g$ with 
\[ S(h_0) \equiv c > 0, \quad \sum_{i,j=1} ^m (h_0)^{ij} \frac{ \pa (h_0)_{ij}}{ \pa_x} =0,\] 
does there always exist a conformal deformation to a conic metric with constant scalar curvature, without further assuming $c=m(m-1)$?  If the conformal factor must be polyhomogeneous conformal, it turns out that the answer is \em no.  \em  Consequently, the assumptions of the theorem for such conformal factors are not only necessary but also sufficient as we will demonstrate here.  

One can ask what happens if more
general expansions are allowed?  For example, if the conformal factor 
\[ u \sim u_0 x^{\alpha} + O(x^{\alpha + \eps}),  \quad u_0 \textrm{ and $\eps$ are positive constants.}\]  
Consequently 
\[ u^{\frac{4}{m-1}} \sim u_0^{ \frac{4}{m-1}} x^{\frac{4\alpha}{m-1}} \left( 1 + O(x^\eps) \right), \quad x \to 0.\] 
For the metric 
\[ \tilde{g} := u^{\frac{4}{m-1}} g, \] 

by the calculation (\ref{eq:ug1}), 
\[ S(\tilde{h}_0) = \frac{ \left( \frac{4\alpha}{m-1} + 2\right)^2}{4} S(h_0) = \left( \frac{2\alpha}{m-1} + 1\right)^2 c.\] 
By Proposition 1, to have constant scalar curvature, a necessary condition is that 
\begin{equation} \label{eq:alpha_eq} \left( \frac{2\alpha}{m-1} + 1\right)^2 c = m(m-1) \implies \alpha = \frac{1-m}{2} \pm \frac{m-1}{2} \sqrt{ \frac{m(m-1)}{c}}   \end{equation} 
\[ = \frac{m-1}{2}\left(-1 \pm \sqrt{ \frac{m(m-1)}{c}}\right).\] 
The Yamabe equation for the scalar curvature gives that 
\[ S(\tilde g) = \frac{ - \frac{4m}{m-1} \Delta u + S(g) u}{u^{\frac{m+3}{m-1}}}.\] 
The leading order asymptotic behavior is 
\[ u_0 ^{1-\frac{m+3}{m-1}} \left( - \frac{4 m}{m-1} \left( \alpha (\alpha - 1) +m \alpha\right) + c - m(m-1)\right)x^{\alpha - 2 - \frac{m+3}{m-1}\alpha}, \quad x \to 0.\] 
If the scalar curvature is constant, then the power of $x$ should vanish, or its coefficient should vanish.  The power of $x$ is 
\[ \frac{-4 \alpha}{m-1} - 2 = 0 \iff \alpha = - \frac{m-1}{2}.\] 
This is impossible by \eqref{eq:alpha_eq}.  

The coefficient vanishes if and only if  
\[-\alpha(\alpha+m-1) + c \frac{m-1}{4m} - \frac{(m-1)^2}{4} = 0\]
\[ \iff \alpha^2 + (m-1) \alpha - c \frac{m-1}{4m} + \frac{(m-1)^2}{4} = 0.\] 
That is a quadratic equation for $\alpha$ whose solutions are 
\[ -\frac{(m-1)}{2} \pm \frac 1 2 \sqrt{ (m-1)^2 - 4 \left( -c \frac{m-1}{4m} + \frac{(m-1)^2}{4} \right)}.\] 
\[ = - \frac{(m-1)}{2} \pm  \frac{\sqrt{c(m-1)}}{2 \sqrt m}.\] 
We compare this to the expression for $\alpha$ \eqref{eq:alpha_eq}, which shows that the coefficient vanishes if and only if 
\[  \pm  \frac{\sqrt{c(m-1)}}{2 \sqrt m} = \pm \frac{(m-1)}{2} \sqrt{\frac{m(m-1)}{c}} \iff c = m(m-1) \iff \alpha = 0. \] 
Consequently, for all $\alpha \neq 0$, the scalar curvature of $\tilde g$ either decays or diverges as $x \to 0$ and therefore cannot be constant.  Hence within the class of polyhomogeneous conformal factors, the conditions of Theorem 1 are both necessary and sufficient.


\subsection{Positive examples}
The calculation (\ref{eq:wp}) provides examples of conic metrics which satisfy the hypotheses of the theorem and have non-constant negative scalar curvature.  In particular, fix a closed $m$ dimensional Riemannian manifold $(Y, h)$ with constant scalar curvature $m(m-1)$.  Let $u= u(x)$ be a positive polyhomogeneous conormal function that depends only on $x$ near $\pa X$ such that $u \sim 2 + O(x^{\alpha})$ as $x \to 0$ for some $\alpha \geq 2$.  Assume further that $u$ is at least twice smoothly differentiable, $u' \geq 0$, $u''$ is bounded, and $u \geq 2$ on $X$.  Then, for the metric 

\[ g = dx^2 + x^2 u^2(x) h(\y; d\y) = dx^2 + x^2 H(x, \y; d\y),\] 
by (\ref{eq:wp}), 
\[ S(g) = \frac{1}{x^2 u^2(x)} \left[ S(h)(\y) - 2 m x u(x) (x u''(x) + 2u'(x)) - m(m-1) |x u'(x) + u(x)|^2 \right] \] 
\[ = \frac{1}{x^2 u^2(x)} \left[ m(m-1)(1-|xu'(x) + u(x)|^2) - 2 m x u(x) (xu''(x) + 2 u'(x)) \right].\] 
Since $u \sim 2 + O(x^{\alpha})$ as $x \to 0$ with $\alpha \geq 2$, and $u''(x)$ is bounded, we have 
\[ m(m-1)(1-|xu'(x) + u(x)|^2) - 2 m x u(x) (xu''(x) + 2 u'(x))\] 
\[ \sim -3m(m-1) + \mathcal O (x),  \textrm{ as } x \to 0.\] 
Consequently, there is $\epsilon > 0$ such that for all $x \in (0, \epsilon)$, $S(g)$ is negative.  
By the form of the metric and the definition of $u$, 
\[ \frac{\pa H_{ij} }{\pa x} = 2 u(x) u'(x) h_{ij} \to 0 \textrm{ as } x \to 0.\] 
This shows that condition 2 of Proposition 1 is satisfied. 

Consequently, this construction produces conic metrics with non-constant negative scalar curvature that satisfy the hypotheses of the Theorem.  There are many similar examples.  

\subsection{The orbifold Yamabe invariant}

The Yamabe invariant of orbifolds and cylindrical manifolds is the subject of beautiful work by Akutagawa and Botvinnik \cite{ab1}, \cite{ab2}, and \cite{ab3}.  Since the Stokes formula and divergence theorem hold for conic metrics, Proposition 2.1 of \cite{ab3} extends to our setting.  
Recall the Einstein-Hilbert functional, 
\begin{equation} \label{eq:eh} EH(g) = \frac{\int_M S_g dVol_g}{ \left( \int_M dVol_g \right)^{\frac{m-1}{m+1}}}. \end{equation} 
\begin{prop} Given an incomplete Riemannian manifold $(X,g)$ with conic metric, the set of critical points of $EH$ which are also conic metrics coincides with the set of Einstein conic metrics on $X$.
\end{prop}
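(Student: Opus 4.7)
The plan is to adapt the classical Hilbert--Einstein variational characterization of Einstein metrics to the conic setting, leveraging the proposition's hypothesis that Stokes' formula and the divergence theorem extend to conic metrics.

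First, fix a conic metric $g$ on $X$ and consider a one-parameter family $g_t = g + t k + O(t^2)$ of conic metrics, where $k := \pa_t g_t|_{t=0}$ is a symmetric two-tensor whose expansion at $\pa X$ is compatible with the conic form of Definition~\ref{def:cone}. The pointwise first-variation formulas, valid on the smooth interior, are the standard ones:
$$\frac{d}{dt}\bigg|_{t=0} dV_{g_t} = \tfrac{1}{2}\operatorname{tr}_g(k)\, dV_g,$$
$$\frac{d}{dt}\bigg|_{t=0} S_{g_t} = -\Delta_g(\operatorname{tr}_g k) + \operatorname{div}_g\operatorname{div}_g k - \langle \operatorname{Ric}_g, k\rangle.$$

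Second, integrate these pointwise formulas against $dV_g$ over $X$. The two divergence-type terms $-\int_X \Delta_g(\operatorname{tr}_g k)\, dV_g$ and $\int_X \operatorname{div}_g\operatorname{div}_g k\, dV_g$ would normally contribute boundary integrals on $\pa X$ via Stokes, but the extension of Stokes' formula to conic metrics cited in the proposition ensures these vanish: on the truncated boundary $\{x = \varepsilon\}$ the conic volume element is $\varepsilon^m \sqrt{\det h}\, d\y$, which dominates the mild growth of $\nabla \operatorname{tr}_g k$ and $\operatorname{div}_g k$ permitted by the conic-compatibility of $k$, so the boundary integrals are $o(1)$ as $\varepsilon \to 0$ since $m \geq 2$.

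Third, applying the chain rule to (\ref{eq:eh}) yields
$$\frac{d}{dt}\bigg|_{t=0} EH(g_t) = V_g^{-\frac{m-1}{m+1}} \int_X \left\langle -\operatorname{Ric}_g + \tfrac{1}{2} S_g\, g - \tfrac{m-1}{2(m+1)} \bar{S}_g\, g,\ k\right\rangle dV_g,$$
with $\bar{S}_g := V_g^{-1}\int_X S_g\, dV_g$. Because admissible variations $k$ include every symmetric two-tensor compactly supported in the interior of $X$ (which are trivially conic-compatible), vanishing of this variational derivative for all such $k$ forces the bracketed tensor to vanish pointwise on $X$. Tracing the resulting equation over the $(m+1)$-dimensional tangent space gives $S_g \equiv \bar{S}_g$, so the scalar curvature is constant; substituting back yields $\operatorname{Ric}_g = \tfrac{S_g}{m+1} g$, exhibiting $g$ as Einstein. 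Conversely, any Einstein conic metric has constant $S_g$ and satisfies this equation, hence makes the bracket vanish and is a critical point.

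The principal analytic obstacle is validating the vanishing of boundary terms produced by integrating by parts against $\Delta_g(\operatorname{tr}_g k)$ and $\operatorname{div}_g\operatorname{div}_g k$ at the cone tip. This is exactly where the conic Stokes/divergence theorem enters, and rigorous verification requires checking that conic-compatible variations $k$ have derivatives whose growth at $\pa X$ is controlled by the volume factor $x^m$, so that the boundary contributions on $\{x=\varepsilon\}$ vanish in the limit $\varepsilon \to 0$.
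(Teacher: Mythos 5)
Your argument is correct and is essentially the proof the paper intends: the paper gives no argument of its own, merely observing that Proposition 2.1 of Akutagawa--Botvinnik extends because the Stokes and divergence theorems hold for conic metrics, and your write-up is precisely the standard first-variation computation that citation encapsulates, with the conic Stokes formula invoked at exactly the same point (killing the $\Delta_g(\operatorname{tr}_g k)$ and $\operatorname{div}_g\operatorname{div}_g k$ terms). The only caveat is that your final boundary-term estimate is stated as needing verification rather than carried out, but the paper leaves that same step implicit.
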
  
Analogous to the orbifold Yamabe constant of \cite{ab3}, one may define the conic Yamabe constant.  
\begin{defn} \em Let $(X, g)$ be an incomplete Riemannian manifold with conic metric.  If there exists a conformal deformation $u$ such that $\tilde{g} = ug$ and $\tilde{g}$ is also a conic metric, then we write $\tilde{g} \in [g]_{c}$.  We define
$$Y_{[g]} ^{c} (X) := \inf_{\tilde{g} \in [g]_{c}} EH(\tilde{g}).$$
\end{defn} 
Aubin's inequality $-\infty < Y_{[g]} ^{c} (X) \leq Y(\bS^n)$ holds in this context.  The conic Yamabe invariant is defined analogous to the orbifold Yamabe invariant
$$Y^{c} (X) := \sup_{[g]_{c}} Y_{[g]} ^{c} (X),$$
where the supremum is taken over all conformal classes of conic metrics on $X$.  In dimension 4, \cite{ab3} prove results using the modified scalar curvature and techniques of Gursky-LeBrun \cite{g}, \cite{glb}.  We expect many results of \cite{ab3} generalize to conic metrics; these generalizations and conformal deformations to constant positive and zero scalar curvature will be the subject of a future work.  
\section*{Acknowledgments}
We would like to thank the Mathematisches Forschungsinstitut in Oberwolfach, Germany, where the majority of this work was completed in the Research in Pairs Program, and the Mathematical Sciences Research Institute in Berkeley, California, where this work was initiated.  The second author was supported by an NSF-AWM Mentoring Travel Grant and is grateful to Rafe Mazzeo and Pierre Albin for discussions and correspondence.   
\begin{bibdiv}
\begin{biblist}
\bibliographystyle{mlr}

\bib{ab1}{article}{author={K. Akutagawa}, author={B. Botvinnik}, title={The relative Yamabe invariant}, journal={Comm. Anal. Geom.}, volume={10}, date={2002}, number={5}, pages={935--969}}

\bib{ab2}{article}{author={K. Akutagawa}, author={B. Botvinnik}, title={Yamabe metrics on cylindrical manifolds}, journal={Geom. funct. anal.}, volume={13}, date={2003}, pages={259--333}}  

\bib{ab3}{article}{author={K. Akutagawa}, author={B. Botvinnik}, title={The Yamabe invariants of orbifolds and cylindrical manifolds, and $\cL^2$-harmonic spinors}, journal={J. reine agnew. Math.}, volume={574}, date={2004}, pages={121--146}}  

\bib{aub}{article}{author={T. Aubin}, title={Equations differentielles et non-lin\'eares et Probl\`eme de Yamabe concernant la courbure scalaire}, journal={J. Math. Pures et Appl.}, volume={55}, date= {1976}, pages={269--296}}  

\bib{amco}{article}{author={P. Aviles}, author={R. McOwen}, title={Conformal deformation to constant negative scalar curvature on noncompact Riemannian manifolds}, journal={J. Diff. Geom.}, volume={27}, date={1988}, pages={225--239}}

\bib{f1}{article}{author={D. L. Finn}, title={On the negative case of the singular Yamabe problem}, journal={J. Geom. Anal.}, volume={9}, number={1}, date={1999}, pages={73--92}} 

\bib{f2}{article}{author={D. L. Finn}, title={Noncompact manifolds with constant negative scalar curvature and singular solutions to semilinear elliptic equations}, journal={Dynamical systems and differential equations}, volume={I}, date={1998}, pages={262--275}} 

\bib{f3}{article}{author={D. L. Finn}, title={Existence of positive solutions to $\Delta_gu=u^q+Su$ with prescribed singularities and their geometric implications}, journal={Comm. Partial Differential Equations}, volume={23}, number={9--10}, year={1998}, pages={1795--1814}} 

\bib{f4}{article}{author={D. L. Finn}, title={Positive solutions of $\Delta_g u=u^q+Su$ singular at submanifolds with boundary}, journal={Indiana Univ. Math. J.}, volume={43},  number={4}, year={1994}, pages={1359--1397}} 

\bib{f5}{article}{author={D. L. Finn}, title={Behavior of positive solutions to $\Delta_gu=u^q+Su$ with prescribed singularities},  journal={Indiana Univ. Math. J.}, volume={49}, number={1}, date={2000},  pages={177--219}}

\bib{fm}{article}{author={D. L. Finn}, author={R. C. McOwen}, title={Singularities and asymptotics for the equation $\Delta_g u - u^q = Su$}, journal={Indiana Univ. Math. J.}, volume={42}, number={4}, date={1993}, pages={1487-- 1523}}  

\bib{bb}{article}{author={D. Grieser}, title={Basics of the $b$-calculus, Approaches to singular analysis}, journal={Oper. Theory Adv. Appl.}, volume={125}, year={1999}, pages={30--84}}

\bib{g}{article}{author={M. Gursky}, title={Four-manifolds with $\delta W^+ = 0$ and Einstein constants of the sphere}, journal={Math. Ann.}, volume={318}, date={2000}, pages={417--431}}

\bib{glb}{article}{author={M. Gursky}, author={C. LeBrun}, title={Yamabe invariants and spin$^c$ structures}, journal={Geom. Funct. Anal.}, volume={8}, date={1998}, pages={965--977}} 

\bib{lp}{article}{author={J. M. Lee}, author={T. H. Parker}, title={The Yamabe problem}, journal={Bull. Amer. Math. Soc.}, volume={17}, date={1987}, pages={37--91}}  

\bib{leung}{article}{author={M. C. Leung}, title= {Conformal deformation of warped products and scalar curvature functions on open manifolds},
journal={Bulletin des Sciences Math\'ematiques}, volume={122}, number={5}, date={1998}, pages={369--398} } 

\bib{ms}{article}{author={R. Mazzeo}, author={N. Smale}, title={Conformally flat metrics of constant positive scalar curvature on subdomains of the sphere}, journal={J. Differential Geom.}, volume={34}, date={1988}, pages={581--621}}

\bib{edge}{article}{author={R. Mazzeo}, title={Elliptic Theory of Differential Edge Operators I}, journal={Comm. Partial Differential Equations}, volume={16},  number={10}, year={1991}, pages={1615--1664}}

\bib{mc}{article}{author={R. McOwen}, title={Results and open questions on the singular Yamabe problem}, journal={Dynamical Systems and Differential Equations II, Discrete Contin. Dynam. Systems}, volume={II}, date={1998}, pages={123--132}}  

\bib{tapsit}{book}{author={R. Melrose}, title={The Atiyah-Patodi-Singer Index Theorem},  series={Research Notes in Mathematics}, volume={4}, publisher={AK Peters, Ltd.}, date={1993}}

\bib{mm}{article}{author={R. Melrose}, author={G. Mendoza}, title={Elliptic operators of totally characteristic type}, journal={MSRI preprint}, date={1983}} 

\bib{scho}{article}{author={R. Schoen}, title={Conformal deformation of a Riemannian metric to constant scalar curvature}, journal={J. Diff. Geom.}, volume={20}, date={1984}, pages={479--496}}  

\bib{tru}{article}{author={N. Trudinger}, title={Remarks concerning the conformal deformation of Riemannian structure on compact manifolds}, journal={Ann. Scuolo Norm. Sup. Pisa}, volume={22}, date={1968}, pages={265--274}}

\bib{ya}{article}{author={H. Yamabe}, title={On the deformation of Riemannian structures on compact manifolds}, journal={Osake Math J.}, volume={12}, date={1960}, pages={21--37}}    

\end{biblist}
\end{bibdiv}
\end{document}